\theoremstyle{remark}
\newtheorem{rem}{Remark}
\theoremstyle{definition}
\newtheorem{def1}{Definition}
\def\Zi{\mathbb{Z}[i]}
\def\mfm{\mathfrak{m}}
\def\mfn{\mathfrak{n}}
\def\mfk{\mathfrak{K}}
\theoremstyle{plain}
\newtheorem{thm}{Theorem}
\newtheorem{lem}[thm]{Lemma}
\newtheorem{cor}[thm]{Corollary}
\newtheorem{conj}{Conjecture}
\newtheorem{conj2}{Conjecture}
\title{On the Distribution of the Greatest Common Divisor of Gaussian Integers}
\begin{document}
\author{Tai-Danae Bradley}
\address{B.S. Program in Mathematics\\
The City College of New York\\
Convent Ave. at 138th St., NY 10031}
\email[Tai-Danae Bradley]{tai.danae@gmail.com}
\author{Yin Choi Cheng}
\email[Yin Choi Cheng]{cycsano@hotmail.com}
\author{Yan Fei Luo}
\email[Yan Fei Luo]{fay.or.flymorning@gmail.com}

\keywords{Gaussian integer, gcd, moment, Dedekind zeta function}
\subjclass[2010]{11N37, 11A05, 11K65, 60E05}
\maketitle

\begin{abstract} 
For a pair of random Gaussian integers chosen uniformly and independently from the set of Gaussian integers of norm $x$ or less as $x$ goes to infinity, we find asymptotics for the average norm of their greatest common divisor, with explicit error terms. We also present results for higher moments along with computational data which support the results for the second, third, fourth, and fifth moments. The analogous question for integers is studied by Diaconis and Erd\"os. 
\end{abstract}

\section{Introduction}

In this paper, we study questions related to the size of the greatest common divisor of pairs of randomly chosen Gaussian integers. In particular, in Theorem \ref{thm:probthm}, we first calculate the probability that a pair of random Gaussian integers, chosen uniformly and independently from the set of all Gaussian integers with norm $x$ or less, has greatest common divisor $\pm \kappa$ or $\pm i\kappa$ for a fixed Gaussian integer $\kappa$. The main term for this probability in the case where $\kappa$=1 was first given by Collins and Johnson \cite[Theorem 8]{relprime}. We refine their results by providing the expression for the more general case in addition to giving an explicit error term for all cases. In Theorem \ref{thm:expthm} we derive the expected norm of  the greatest common divisor between a pair of Gaussian integers with norm $x$ or less. Finally, in Theorem \ref{thm:hmom} and Conjecture \ref{conj}, we present an expression for higher moments of the norm of the greatest common divisor between a pair of Gaussian integers with norm $x$ or less. We expect our results to generalize to principal ideal domains without too much difficulty. More generally, our results should hold for the ring of integers in an algebraic number field, though our techniques will need to be modified to deal with class number greater than one and infinite unit group. We expect the ideas in the recent preprint of \cite{Gia} could help address this question and would be an interesting direction to explore further. Of further interest are function field analogues. Some interesting results in this direction may be found in \cite{Gia2}.

Similar questions have also been studied for the case of rational integers. Originally, Mertens \cite{Mertens} proved in 1874 that the probability that a pair of rational integers chosen uniformly and independently at random from $\{1,2,\ldots,x\}$ are relatively prime is asymptotic to $1/\zeta(2)$, as $x$ tends to infinity, where $\zeta$ is the Riemann zeta function. In 1956, Christopher \cite[Theorem 1]{Chris} generalized Mertens' result by finding the probability that two integers have  greatest common divisor $k$ for a fixed $k$ larger than 1. An asymptotic expression for the moments of the greatest common divisor was first derived by Ces\`{a}ro in 1885 \cite{Cesaro}, and Diaconis and Erd\"os later extended his work by explicitly calculating the error term \cite[Theorem 2]{Erd}. In particular, the expected value for the greatest common divisor between a pair of random integers chosen independently and uniformly from the set $\{1,2,\ldots,x\}$ is 
 \begin{equation} \label{eq:Erd1}
\frac{1}{\zeta(2)}\log{x} + O(1)
\end{equation}
while the $n^{\text{th}}$ moment is given by
\begin{align}\label{eq:Erd2}
\frac{x^{n-1}}{n+1}\left\{ \frac{2\zeta(n)}{\zeta(n+1)}-1  \right\} +O\left(x^{n-2}\log{x}\right) \qquad \mbox{ for $n\geq 2$}. 
\end{align}

The goal of the present paper is to show that equation (\ref{eq:Erd1}) has an analogous counterpart in the ring of Gaussian integers as stated in Theorem \ref{thm:expthm} at the end of this section. Further, we show that equation (\ref{eq:Erd2}) also has an analogous form as presented in Theorem \ref{thm:hmom} and Conjecture \ref{conj}. Before proceeding, we first give the following preliminary definitions and remark.

\begin{def1} The norm of a Gaussian integer $\alpha=a+bi$ for rational integers $a$ and $b$ is defined by $N(\alpha)=a^2+b^2$. 
\end{def1}
Most of our results will be in terms of the norms of Gaussian integers and not the integers themselves. 

\begin{rem} Given two Gaussian integers $\eta$ and $\mu$, a greatest common divisor, denoted $(\eta,\mu)$, is defined to be a Gaussian integer $\kappa$ such that $\kappa$ is a divisor of both $\eta$ and $\mu$, and if there is any other common factor between $\eta$ and $\mu$, then it must also be a factor of $\kappa$. From this definition, it becomes clear that $(\eta,\mu)$ is unique only up to its associates. In other words, $(\eta,\mu)=\kappa,-\kappa,i\kappa, \mbox{ and }-i\kappa$. Our calculations, however, will be performed via ideals\ for reasons that will soon become apparent. For a Gaussian integer $\eta$, $\mathfrak{n}$ is the ideal such that $\mathfrak{n}=(\eta)=(-\eta)=(i\eta)=(-i\eta)$, and the norm of $\mathfrak{n}$ is defined by $N(\mathfrak{n})=N(\eta)$. Accordingly, the definition of the greatest common divisor for a pair of ideals is: 
\end{rem}

\begin{def1}[Greatest Common Divisor of Two Ideals] For a ring $R$, let $\mathfrak{n},\mathfrak{m}\subset R$ be ideals. The greatest common divisor $(\mathfrak{n},\mathfrak{m})$ is defined to be the ideal $\mathfrak{K}\subset R$ which satisfies the following:
\begin{enumerate}
	\item $\mathfrak{n}\subset\mathfrak{K}$ and $\mathfrak{m}\subset\mathfrak{K}$
	\item if there exists some ideal $\mathfrak{a}\subset R$ such that $\mathfrak{n}\subset\mathfrak{a}$ and $\mathfrak{m}\subset\mathfrak{a}$, then $\mathfrak{K}\subset\mathfrak{a}$.
\end{enumerate}
\end{def1}
\noindent In other words, $(\mathfrak{n},\mathfrak{m})$ is the smallest ideal that contains all the elements of both $\mathfrak{n}$ and $\mathfrak{m}$. When applied  to the ring of Gaussian integers, a Dedekind domain, it is clear that $(\mathfrak{n},\mathfrak{m})$ is unique.

\begin{def1}[The Dedekind Zeta Function of $\mathbb{Q}(i)$] For the number field $\mathbb{Q}(i)$, the complex-valued Dedekind zeta function is defined for Re$(s)>1$ by
\begin{equation*}
\zeta_{\mathbb{Q}(i)}(s) \quad = \quad \sum_{\mathfrak{a}\subset \mathbb{Z}[i]}{\frac{1}{N(\mathfrak{a})^s}}  \quad = \quad
\frac{1}{4}\sum_{\underset{(a,b)\neq(0,0)}{{(a,b)\in\mathbb{Z}}}}{\frac{1}{(a^2+b^2)^s}}
\end{equation*} 
where the first summation is over the nonzero ideals $\mathfrak{a}$ of the ring of Gaussian integers $\mathbb{Z}[i]$.  \end{def1}
In order to find the expression for the expected norm of a greatest common divisor between a pair of Gaussian integers of norm $x$ or less, we will first derive the necessary probability distribution function of Theorem \ref{thm:probthm}:
\begin{thm}\label{thm:probthm} Let $\mathfrak{n}$ and $\mathfrak{m}$ be nonzero ideals chosen independently and uniformly at random from the set of ideals in $\Zi$ with norm $x$ or less. The probability that $(\mathfrak{n},\mathfrak{m})=\mathfrak{K}$ is
\begin{equation*} 
\frac{1}{\zeta_{\mathbb{Q}(i)}(2)N(\mathfrak{K})^2} + O\left(\frac{1}{x^{2/3}N(\mathfrak{K})^{4/3}}\right).
\end{equation*}
\end{thm}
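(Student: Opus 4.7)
The plan is to reduce the problem to counting pairs of coprime ideals and then apply Möbius inversion together with a lattice‑point estimate. First, I would observe that any pair $(\mathfrak{n},\mathfrak{m})$ with $(\mathfrak{n},\mathfrak{m})=\mathfrak{K}$ can be written uniquely as $\mathfrak{n}=\mathfrak{K}\mathfrak{n}'$, $\mathfrak{m}=\mathfrak{K}\mathfrak{m}'$ where $\mathfrak{n}',\mathfrak{m}'$ are coprime ideals. The constraints $N(\mathfrak{n}),N(\mathfrak{m})\leq x$ translate to $N(\mathfrak{n}'),N(\mathfrak{m}')\leq y$ with $y=x/N(\mathfrak{K})$. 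So the problem becomes: count coprime pairs of ideals of norm $\leq y$, then divide by $I(x)^2$, where $I(t)$ denotes the number of nonzero ideals of $\Zi$ of norm $\leq t$.

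Next I would invoke the Gauss circle problem in the form
\begin{equation*}
I(t) \;=\; \frac{\pi}{4}\, t \;+\; O\!\left(t^{1/3}\right),
\end{equation*}
since each ideal of $\Zi$ corresponds to four associated Gaussian integers (a quarter of the lattice points in the disk of radius $\sqrt{t}$), and apply Möbius inversion over ideals to count coprime pairs: writing $R(y)$ for the number of ordered pairs $(\mathfrak{n}',\mathfrak{m}')$ of coprime ideals with $N(\mathfrak{n}'),N(\mathfrak{m}')\leq y$,
\begin{equation*}
R(y) \;=\; \sum_{N(\mfd)\leq y}\mu(\mfd)\, I\!\left(\tfrac{y}{N(\mfd)}\right)^{2},
\end{equation*}
where $\mu$ is the Möbius function on ideals of $\Zi$.

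Now I would expand $I(y/N(\mfd))^2 = (\pi/4)^2 y^2/N(\mfd)^2 + O\!\left(y^{4/3}/N(\mfd)^{4/3}\right)$ and split the resulting double sum. For the main term, I would use
\begin{equation*}
\sum_{\mfd}\frac{\mu(\mfd)}{N(\mfd)^{2}} \;=\; \frac{1}{\zeta_{\mathbb{Q}(i)}(2)},
\end{equation*}
with a tail error of size $O(1/y)$ from truncating at $N(\mfd)\leq y$, which contributes $O(y)$ overall. For the error term, I would use that $\sum_{\mfd}1/N(\mfd)^{4/3}$ converges absolutely (it is $\zeta_{\mathbb{Q}(i)}(4/3)$), yielding $R(y) = (\pi/4)^{2} y^{2}/\zeta_{\mathbb{Q}(i)}(2) + O(y^{4/3})$. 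Finally, dividing $R(x/N(\mfk))$ by $I(x)^{2} = (\pi/4)^{2}x^{2} + O(x^{4/3})$ and expanding the ratio gives the claimed main term $1/(\zeta_{\mathbb{Q}(i)}(2)N(\mfk)^{2})$ with error
\begin{equation*}
O\!\left(\frac{1}{x^{2/3}N(\mfk)^{4/3}}\right).
\end{equation*}

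The main technical hurdle is the careful accounting of error terms in the Möbius sum: one must verify that the coprimality tail $O(y)$ coming from the main‑term truncation, the $O(y^{4/3})$ from the circle‑problem error, and the $O(x^{4/3})$ from the denominator all merge correctly after the division, and in particular that the $N(\mfk)^{-4/3}$ dependence in the error is the true bound (not $N(\mfk)^{-2}$). One also has to be mindful that the estimate $I(t) = (\pi/4)t + O(t^{1/3})$ degrades as $t\to 0$, so for $N(\mfd)$ close to $y$ a trivial bound $I(y/N(\mfd)) \ll y/N(\mfd) + 1$ must be used; a short check shows this contributes only $O(y)$, absorbed in the final error.
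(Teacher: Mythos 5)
Your proposal is correct and follows essentially the same route as the paper: M\"obius inversion over ideals of $\Zi$ combined with the circle-problem count $I(t)=\frac{\pi}{4}t+O(t^{1/3})$, the reindexing $\mathfrak{n}=\mathfrak{K}\mathfrak{n}'$, $\mathfrak{m}=\mathfrak{K}\mathfrak{m}'$ to reduce to coprime pairs at scale $x/N(\mathfrak{K})$, and division by the total pair count with expansion of $\left[1+O(x^{-2/3})\right]^{-1}$. The only differences are cosmetic orderings and slightly cleaner bookkeeping (your tail bound $O(1/y)$ via partial summation versus the paper's $o(y^{\epsilon-1})$ from $r(n,2)\leq 4\sigma_0(n)$, and convergence of $\zeta_{\mathbb{Q}(i)}(4/3)$ in place of the paper's explicit partial-sum estimate), none of which changes the argument.
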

This probability will allow us to calculate the expected norm of the greatest common divisor between a pair of ideals:

\begin{thm}\label{thm:expthm} Let $\mathfrak{n}$ and $\mathfrak{m}$ be nonzero ideals chosen independently and uniformly at random from the set of ideals in $\Zi$ with norm $x$ or less. The expected norm of the greatest common divisor of $\mathfrak{n}$ and $\mfm$ is
 \begin{equation*} \label{eq:main}
 \frac{\pi }{4\zeta_{\mathbb{Q}(i)}(2)}\log{x} + O(1).\\
 \end{equation*}
\end{thm}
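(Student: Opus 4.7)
My plan is to apply Theorem \ref{thm:probthm} inside the definition of the expected value and then reduce the calculation to classical counting estimates for ideals of $\Zi$. Since the gcd of two nonzero ideals of norm at most $x$ itself has norm at most $x$, one can write
\[
\mathbb{E}\bigl[N((\mathfrak{n},\mathfrak{m}))\bigr] \;=\; \sum_{1\le N(\mathfrak{K})\le x} N(\mathfrak{K})\,P\bigl((\mathfrak{n},\mathfrak{m})=\mathfrak{K}\bigr).
\]
Substituting the formula from Theorem \ref{thm:probthm} splits the right-hand side into a main piece $\frac{1}{\zeta_{\mathbb{Q}(i)}(2)}\sum_{N(\mathfrak{K})\le x} N(\mathfrak{K})^{-1}$ and an error of size $O\bigl(x^{-2/3}\sum_{N(\mathfrak{K})\le x} N(\mathfrak{K})^{-1/3}\bigr)$.

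Next I would evaluate both sums by Abel summation against the ideal-counting function $R(t) := \#\{\mathfrak{a}\subset\Zi : N(\mathfrak{a})\le t\}$. Because $\Zi$ is a principal ideal domain with four units, Gauss's circle estimate for lattice points in a disk gives $R(t)=\tfrac{\pi}{4}t+O(t^{1/2})$. Feeding this into the first sum should yield $\frac{\pi}{4}\log x + O(1)$, while the second grows like $x^{2/3}$, so after dividing by $x^{2/3}$ the error contribution collapses to $O(1)$. Combining these recovers $\frac{\pi}{4\zeta_{\mathbb{Q}(i)}(2)}\log x + O(1)$ as claimed.

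No step here is genuinely deep, but the quality of the error in Theorem \ref{thm:probthm} is doing real work: the exponent $2/3$ appearing in $x^{-2/3}N(\mathfrak{K})^{-4/3}$ is exactly what is needed for the total error, after multiplying by $N(\mathfrak{K})$ and summing, to collapse to $O(1)$, and anything weaker would leak into the $\log x$ asymptotic. The most delicate bookkeeping is the Abel summation itself, where I would need to verify that the boundary terms and the integral $\int_1^x R(t)/t^2\,dt$ contribute only a constant rather than contaminating the coefficient of $\log x$; this is standard but is where the genuine arithmetic input from the circle problem enters the argument.
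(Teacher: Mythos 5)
Your proposal is correct and follows essentially the same route as the paper: substitute Theorem \ref{thm:probthm} into the expectation, split into a main term and an $O\bigl(x^{-2/3}\sum N(\mathfrak{K})^{-1/3}\bigr)$ error, and evaluate both by partial summation against the ideal-counting function (the paper writes these sums as $\tfrac14\sum_{k\le x} r(k,2)k^{-s}$ and handles the error sum by Stieltjes integration by parts exactly as you propose, obtaining $\tfrac{3\pi}{2}x^{2/3}+O(\log x)$ and hence an $O(1)$ contribution). The only cosmetic difference is that for the main sum the paper quotes Sierpi\'nski's identity $\sum_{k\le x} r(k,2)/k = \pi(S+\log x)+O(x^{-1/2})$ rather than re-deriving $\tfrac{\pi}{4}\log x + O(1)$ from the circle-problem estimate via Abel summation as you do; the two computations are equivalent, and your weaker Gauss bound $R(t)=\tfrac{\pi}{4}t+O(t^{1/2})$ is indeed sufficient throughout.
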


We will then prove the following result regarding the $n^{\text{th}}$ moment for $n>2$:

\begin{thm}\label{thm:hmom} 
Let $\mathfrak{n}$ and $\mathfrak{m}$ be nonzero ideals chosen independently and uniformly at random from the set of ideals in $\Zi$ with norm $x$ or less. For $n> 2$, there exists a constant $c_n\in\mathbb{R}$ such that
\begin{align*}
E_x\{N(\mfn,\mfm)^n\}\sim  c_nx^{n-1} 
\end{align*}
where $E_x\{N(\mfn,\mfm)^n\}$ denotes the $n^\text{th}$ moment of the norm of the greatest common divisor of $\mfn$ and $\mfm$.
\end{thm}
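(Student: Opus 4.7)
The plan is to begin from the identity
$$\sum_{N(\mfn),N(\mfm)\le x} N((\mfn,\mfm))^n \;=\; \sum_{\mfk:\,N(\mfk)\le x} N(\mfk)^n\, C(x/N(\mfk)),$$
where $C(y)$ counts ordered coprime ideal pairs $(\mfa,\mfb)$ with $N(\mfa),N(\mfb)\le y$. This follows from writing each pair $(\mfn,\mfm)$ uniquely as $(\mfk\mfa,\mfk\mfb)$ with $\mfk=(\mfn,\mfm)$ and $(\mfa,\mfb)=(1)$. The crucial move is to reverse the order of summation: the triple $(\mfk,\mfa,\mfb)$ satisfies all the norm constraints iff $N(\mfk)\le x/M$ with $M:=\max(N(\mfa),N(\mfb))$, so
$$\sum_{\mfk}N(\mfk)^n\, C(x/N(\mfk)) \;=\; \sum_{(\mfa,\mfb)=(1),\, M\le x} F(x/M), \qquad F(y):=\sum_{N(\mfk)\le y}N(\mfk)^n.$$
This concentrates all of the $x$-dependence into the inner partial sum $F$, which is far easier to estimate than the original nested sum.

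Next I would apply the Landau estimate $N_y=\frac{\pi}{4}y+O(y^{1/2})$ for ideals of $\Zi$ (Gauss circle in disguise) together with Abel summation to get $F(y)=\frac{\pi\,y^{n+1}}{4(n+1)}+O(y^{n+1/2})$. Substituting yields the main contribution
$$\frac{\pi\, x^{n+1}}{4(n+1)}\sum_{(\mfa,\mfb)=(1),\,M\le x} \frac{1}{M^{n+1}}.$$
As $x\to\infty$ the truncated series converges to
$$L_n\;:=\;\sum_{(\mfa,\mfb)=(1)} \frac{1}{\max(N(\mfa),N(\mfb))^{n+1}},$$
which is absolutely convergent since $C(y)=O(y^2)$ makes the tail $\sum_{M>y} M^{-(n+1)}$ of order $y^{1-n}$ for any $n>1$. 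Dividing by $N_x^2\sim(\pi/4)^2 x^2$ then yields $E_x\{N(\mfn,\mfm)^n\}\sim c_n x^{n-1}$ with $c_n=4L_n/(\pi(n+1))$.

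The principal obstacle is controlling the error introduced by the substitution $F(x/M)\approx\frac{\pi(x/M)^{n+1}}{4(n+1)}$ once it is summed over coprime pairs. Using the divisor-type bound that the number of coprime pairs with $\max=m$ is $O(m^{1+\epsilon})$, the accumulated error is $O(x^{n+1/2})$ provided $n>3/2$, a threshold comfortably covered by the hypothesis $n>2$. This parallels the subtlety that Diaconis and Erd\"os resolved in the rational integer case to extract the explicit constant $\tfrac{2\zeta(n)}{\zeta(n+1)}-1$ in their formula; we expect a similar evaluation of $L_n$ in terms of $\zeta_{\mathbb{Q}(i)}$ values, though the present statement requires only that $L_n$ be finite.
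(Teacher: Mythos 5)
Your proof is correct, but it is essentially the partial-summation transpose of the paper's argument, and the reorganization buys something real. Both proofs start from the same bijection $(\mfn,\mfm)\leftrightarrow(\mfk,\mfa,\mfb)$ with $\mfk=(\mfn,\mfm)$, $(\mfa,\mfb)=(1)$, and $N(\mfk)\max(N(\mfa),N(\mfb))\leq x$. The paper keeps $\mfk$ on the outside: it buckets $N(\mfk)\in\left(\frac{x}{j+1},\frac{x}{j}\right]$, uses Lemma \ref{lem:lem2} for the coprime-pair count $f(j)$ (in fact only the bound $f(j)=O(j^2)$ is used), evaluates the inner sum $\sum N(\mfk)^n$ over each bucket by Stieltjes integration, and extracts the main term from the convergent series $\sum_j f(j)\bigl[j^{-(n+1)}-(j+1)^{-(n+1)}\bigr]$. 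You keep the coprime pair on the outside and absorb all $x$-dependence into $F(x/M)$; Abel summation shows your $L_n$ is exactly the limit of the paper's series, so the constants agree: $c_n=4L_n/(\pi(n+1))$ matches the paper's $c_n=16c_n'/\pi^2$. Your route (i) bypasses the M\"obius computation of Lemma \ref{lem:lem2} entirely, since the trivial $C(y)=O(y^2)$ and the ideal count $\frac{\pi}{4}y+O(y^{1/2})$ suffice; (ii) gives a cleaner error analysis with threshold $n>3/2$ rather than $n>2$ --- the second error sum in (\ref{eq:numerator}) is what forces the paper to exclude $n=2$, whereas your bound $O(m^{1+\epsilon})$ on pairs with $\max$ exactly $m$ (valid since there are $r(m,2)/4=O(m^{\epsilon})$ ideals of norm $m$) keeps the accumulated error at $O(x^{n+1/2})$ already for $n=2$, so your argument actually proves the $n=2$ asymptotic whose existence the paper only obtains via Conjecture \ref{conj}; and (iii) it yields a semi-explicit constant as an absolutely convergent series over coprime pairs, which is precisely the object one would try to evaluate in closed form (presumably to $\frac{2\zeta_{\mathbb{Q}(i)}(n)}{\zeta_{\mathbb{Q}(i)}(n+1)}-1$) to settle the conjecture. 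The one step you should state explicitly rather than leave implicit is the truncation: replacing $\sum_{M\leq x}M^{-(n+1)}$ by $L_n$ costs $O(x^{1-n})$, hence contributes $O(x^2)=o(x^{n+1})$ to the numerator, which your tail estimate does cover.
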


Lastly, we will present numerical data which provide strong evidence for the following conjecture regarding the constant of Theorem \ref{thm:hmom} for all $n\geq 2$:
\begin{conj}\label{conj}
For $n\geq 2$, 
 \begin{equation*} 
E_x\{N(\mfn,\mfm)^n\}\sim \frac{4 }{\pi(n+1)}\left\{\frac{2\zeta_{\mathbb{Q}(i)}(n)}{\zeta_{\mathbb{Q}(i)}(n+1)}-1\right\}x^{n-1}. 
 \end{equation*}
\end{conj}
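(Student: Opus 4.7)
The plan is to expand $N((\mfn,\mfm))^n$ via the Jordan-totient identity $N(\mfa)^n = \sum_{\mathfrak{e}\mid\mfa} J_n(\mathfrak{e})$ with $J_n(\mathfrak{e}) := N(\mathfrak{e})^n\prod_{\mathfrak{p}\mid\mathfrak{e}}(1-N(\mathfrak{p})^{-n})$, and then swap the resulting double sum. Summing over pairs $(\mfn,\mfm)$ with $N(\mfn), N(\mfm) \leq x$, and writing $I(y) := \#\{\mfa : N(\mfa)\leq y\}$, one obtains
\[
\sum_{N(\mfn),\,N(\mfm)\leq x} N((\mfn,\mfm))^n \;=\; \sum_{N(\mathfrak{e})\leq x} J_n(\mathfrak{e})\, I(x/N(\mathfrak{e}))^2.
\]
First I would interchange the order of summation by writing $I(x/N(\mathfrak{e}))^2$ as a sum over ordered pairs of ideals $(\mathfrak{a}_1,\mathfrak{a}_2)$ satisfying $N(\mathfrak{a}_i)\, N(\mathfrak{e}) \leq x$ and summing $\mathfrak{e}$ first. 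Setting $M := \max(N(\mathfrak{a}_1), N(\mathfrak{a}_2))$, this transforms the right-hand side into $\sum_{N(\mathfrak{a}_1),N(\mathfrak{a}_2)\leq x} G(x/M)$, where $G(y) := \sum_{N(\mathfrak{e})\leq y} J_n(\mathfrak{e})$.

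Next I would combine M\"obius inversion (for $J_n = \mathrm{id}^n \ast \mu$) with Landau's theorem $I(y) = (\pi/4)y + O(y^{1/2})$ to derive
\[
G(y) \;=\; \frac{\pi\, y^{n+1}}{4(n+1)\,\zeta_{\mathbb{Q}(i)}(n+1)} + O(y^{n+1/2}),
\]
so that the swapped sum equals
\[
\frac{\pi\,x^{n+1}}{4(n+1)\,\zeta_{\mathbb{Q}(i)}(n+1)} \sum_{N(\mathfrak{a}_i)\leq x} M^{-(n+1)} \;+\; O\!\left(x^{n+1/2} \sum_{N(\mathfrak{a}_i)\leq x} M^{-(n+1/2)}\right).
\]

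The crux of the argument is to show that the pair-series $S_\infty := \sum_{\mathfrak{a}_1,\mathfrak{a}_2 \subset \Zi} M^{-(n+1)}$ converges to a finite positive real for $n > 2$. Splitting the truncated series by which of $N(\mathfrak{a}_1), N(\mathfrak{a}_2)$ is larger, with $r(k) := \#\{\mathfrak{a} : N(\mathfrak{a}) = k\}$ accounting for ties, gives
\[
\sum_{N(\mathfrak{a}_i)\leq x} M^{-(n+1)} \;=\; 2 \sum_{N(\mathfrak{a})\leq x} I(N(\mathfrak{a}))\,N(\mathfrak{a})^{-(n+1)} \;-\; \sum_{N(\mathfrak{a})\leq x} r(N(\mathfrak{a}))\,N(\mathfrak{a})^{-(n+1)}.
\]
Landau's theorem reduces the leading piece to $(\pi/2)\sum_{\mathfrak{a}} N(\mathfrak{a})^{-n} = (\pi/2)\,\zeta_{\mathbb{Q}(i)}(n)$, which converges for $n \geq 2$, and a tail estimate yields $S_\infty - \sum_{N(\mathfrak{a}_i)\leq x} M^{-(n+1)} = O(x^{1-n})$. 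The same splitting shows the partner series $\sum M^{-(n+1/2)}$ converges. Combining these estimates and dividing through by $I(x)^2 \sim (\pi/4)^2 x^2$ yields
\[
E_x\{N(\mfn,\mfm)^n\} \;\sim\; \frac{4\, S_\infty}{\pi(n+1)\,\zeta_{\mathbb{Q}(i)}(n+1)}\, x^{n-1} \;=:\; c_n\, x^{n-1}.
\]

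The main obstacle is handling the secondary term from the asymptotic for $G$: naively it produces $O(x^{n+1/2})$ in the moment sum, which after division by $I(x)^2$ becomes $O(x^{n-3/2})$, and this is $o(x^{n-1})$ only because $n-3/2<n-1$. The strict inequality $n>2$ enters precisely in the convergence of the auxiliary pair-series with exponents $n+1$ and $n+1/2$, and in the tail bound $O(x^{1-n})$ being genuinely small; verifying these convergences uniformly in $x$, and confirming that the leading-order contribution $(\pi/2)\zeta_{\mathbb{Q}(i)}(n)$ to $S_\infty$ is not cancelled by the auxiliary pieces, constitutes the bulk of the technical work.
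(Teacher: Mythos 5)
Your proposal is not a proof of the conjecture, and the gap sits at the very last step: you never establish that your constant equals the conjectured one, and in fact it does not. Your machinery (which is sound: the Jordan-totient expansion, the swap, the asymptotic for $G$, and the convergence of the max-series all check out) yields
\[
E_x\{N(\mfn,\mfm)^n\} \sim \frac{4\, S_\infty}{\pi(n+1)\,\zeta_{\mathbb{Q}(i)}(n+1)}\,x^{n-1},
\qquad
S_\infty=\sum_{\mathfrak{a}_1,\mathfrak{a}_2\subset\Zi}\max\bigl(N(\mathfrak{a}_1),N(\mathfrak{a}_2)\bigr)^{-(n+1)},
\]
so to reach the conjectured constant you would need the identity $S_\infty = 2\zeta_{\mathbb{Q}(i)}(n)-\zeta_{\mathbb{Q}(i)}(n+1)$. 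Over $\mathbb{Z}$ the analogous identity is exact, because the number of pairs $(a,b)$ with $\max(a,b)=m$ is exactly $2m-1$; that is precisely where Diaconis and Erd\"os's factor $2\zeta(n)/\zeta(n+1)-1$ comes from. Over $\Zi$, grouping pairs by the value $m$ of the larger norm gives $S_\infty=\sum_m r(m)\bigl(I(m)+I(m-1)\bigr)m^{-(n+1)}$, where $r(m)$ is the number of ideals of norm $m$, and $I(m)+I(m-1)\neq 2m-1$ already at $m=4$: there $r(4)\bigl(I(4)+I(3)\bigr)=5$ versus $2\cdot 4-1=7$. The fluctuations $I(m)-\tfrac{\pi}{4}m$ are small on average, but the weights $m^{-(n+1)}$ concentrate all the mass at small $m$, where those fluctuations are of order $1$; their total contribution to $S_\infty$ is a convergent but decidedly nonzero constant. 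Indeed your ``leading piece'' $(\pi/2)\zeta_{\mathbb{Q}(i)}(n)$ is not even a good approximation: for $n=2$ it is $\approx 2.367$ while $S_\infty\approx 1.76$. So the verification you defer to ``the bulk of the technical work'' cannot succeed; the auxiliary pieces genuinely change the constant away from the conjectured closed form.

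The twist is that your route, carried out rigorously, is more valuable than a proof of the stated conjecture: it would refute it. Note first that the paper contains no proof of this statement --- it is offered only as a conjecture, supported by best-fit data, while Theorem \ref{thm:hmom} proves merely the existence of some $c_n$ for $n>2$ via the coarser bucketing $N(\mfk)\in(x/(j+1),x/j]$. Evaluating your constant $4S_\infty/\bigl(\pi(n+1)\zeta_{\mathbb{Q}(i)}(n+1)\bigr)$ numerically gives approximately $0.6399,\ 0.3702,\ 0.2724,\ 0.21914$ for $n=2,3,4,5$, matching the paper's empirically fitted coefficients $0.63952,\ 0.37018,\ 0.27238,\ 0.21914$ essentially exactly, whereas the conjectured values $0.67364,\ 0.37444,\ 0.27309,\ 0.21928$ do not: the discrepancies in Table \ref{tbl} are evidently not slow convergence but the wrong constant, obtained by transplanting the rational-integer identity for the max-series into $\Zi$ where it fails. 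Two smaller corrections to your write-up: your convergence claims in fact hold for all $n\geq2$, not only $n>2$ (the number of pairs with larger norm $m$ is $O(m^{1+\epsilon})$, so $\sum M^{-(n+1/2)}$ converges once $n>3/2$), which means your argument would also extend Theorem \ref{thm:hmom} to $n=2$ with an explicit constant; and the tail bound should read $O(x^{1-n+\epsilon})$ since $r(m)=O(m^{\epsilon})$, which is harmless for the asymptotics.
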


The proof of Theorem \ref{thm:probthm} will be given in Section \ref{sec:sec2} and that of Theorem \ref{thm:expthm} will be given in Section \ref{sec:sec3}. Finally, in Section \ref{sec:sec4}, we prove Theorem \ref{thm:hmom} and present Conjecture \ref{conj} along with computational data which support the conjecture for the second, third, fourth, and fifth moments.

\section{Probability Distribution Function}\label{sec:sec2}

Before deriving the expression for the probability of Theorem \ref{thm:probthm}, we first define the following two functions:

\begin{def1}[The M\"obius Function]
For an ideal $\mathfrak{n}$, the M\"obius Function $\mu(\mathfrak{n})$ is defined by: 
$$ \mu(\mathfrak{n}) = 
	\begin{cases}
		1,		&\text{if $\mathfrak{n}$ = $(1)$}; \\
		(-1)^t, 	&\text{if $\hspace{2pt} \mathfrak{n} = \mathfrak{p}_1 \mathfrak{p}_2 \cdots \mathfrak{p}_t$ for distinct prime ideals $\mathfrak{p}_i$};\\
		0, 		&\text{otherwise}.
	\end{cases}$$
We will use the following identity
\begin{align}\label{eq:mob}
\sum_{\mathfrak{d}|\mathfrak{n}}{\mu(\mathfrak{d})} =
	\begin{cases}	
		1, &\text{if $\mathfrak{n}=(1)$};\\
		0, &\text{if $\mathfrak{n}\neq (1)$}
	\end{cases}
	\end{align} 
as well as the generating function
\begin{equation*} \label{eq:genfcn}
\sum_{\mathfrak{n}\subset \Zi}{\frac{\mu(\mathfrak{n})}{N(\mathfrak{n})^s}} = \frac{1}{\zeta_{\mathbb{Q}(i)}(s)} \parbox[c]{6cm}{\qquad \qquad \qquad for Re$(s)>$1.}
\end{equation*}
\end{def1}

\begin{def1}[The Sum of Two Squares Function]   For $n \in \mathbb{Z}$, let the sum of two squares function $r(n,2)$ represent the number of ways that $n$ can be expressed as a sum of two squares. Thus,  
\begin{equation*}
r(n,2) = \frac{1}{4}\#\{\mathfrak{a} \subset \mathbb{Z}[i] : N(\mathfrak{a})=n\}.
\end{equation*}
We will need the result of Sierpi\'{n}ski \cite{sqfcn} (for a statement in English see \cite[equation (1)]{Schin}) 
\begin{equation} \label{eq:rsum}
\sum_{n=1}^{x}{r(n,2)} = \pi x + O(x^\frac{1}{3}).
\end{equation}
The error term $O(x^{1/3})$ has been improved by Huxley \cite{Hux} to $O(x^{\frac{131}{416} + \epsilon})$, but the former is sufficient for our purposes. We shall also use the following \cite{Sierp} 
\begin{equation} \label{eq:rnsum}
\sum_{n=1}^{x}{\frac{r(n,2)}{n}} =  \pi(S+ \log{x}) + O(x^{-1/2})
\end{equation}
where $S$ denotes Sierpi\'{n}ski's constant $S\approx2.58/\pi$. This also has the alternate expressions \cite[p. 123]{K} 
\begin{equation*}
S= \frac{1}{\pi} \lim_{z \to \infty}\left(4\zeta(z)\beta(z) -\frac{\pi}{z-1}\right) =      \gamma+ \frac{\beta '(1)}{\beta(1)} \end{equation*}
where
$\beta(z)$ is the Dirichlet beta function and $\gamma$ is the Euler-Mascheroni constant.

\end{def1}
With these functions at hand, we may now proceed to calculate the desired probability. To do so, we will need two preliminary results. The first is the total number of pairs of ideals generated by Gaussian integers with norm at most $x$. The second result is the number of those pairs which have greatest common divisor $\mfk$. The expressions for each of these are derived in the following two lemmas. 
\begin{lem}\label{lem:lem1}
The total number of pairs of nonzero ideals $\mathfrak{n}$ and $\mathfrak{m}$ in $\Zi$ with norm $x$ or less is
\begin{align*}
\frac{\pi^2 x^2}{16}  + O(x^{4/3}).
\end{align*}
\end{lem}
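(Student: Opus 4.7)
The plan is to reduce the problem to counting nonzero ideals of norm at most $x$ in $\Zi$, since the pairs $(\mfn,\mfm)$ are ordered and the two coordinates are chosen independently. Writing $I(x)$ for this single-variable count, the number of ordered pairs is simply $I(x)^2$, so it suffices to establish an asymptotic of the form $I(x) = \frac{\pi x}{4} + O(x^{1/3})$. Then squaring and expanding gives
\begin{equation*}
I(x)^2 = \frac{\pi^2 x^2}{16} + O\!\left(x\cdot x^{1/3}\right) + O\!\left(x^{2/3}\right) = \frac{\pi^2 x^2}{16} + O(x^{4/3}),
\end{equation*}
which is the claim.

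To obtain the asymptotic for $I(x)$, I would exploit that $\Zi$ is a principal ideal domain with unit group $\{\pm 1, \pm i\}$ of order $4$. Consequently, every nonzero ideal has exactly four Gaussian integer generators (its associates), and the number of ideals of norm exactly $n$ equals one quarter of the number of $\alpha = a+bi \in \Zi$ with $N(\alpha) = a^2+b^2 = n$. The latter count is precisely the lattice-point count $r(n,2)$ appearing in equation (\ref{eq:rsum}), so the number of ideals of norm $n$ is $r(n,2)/4$. Summing over $n \leq x$ and applying Sierpi\'nski's estimate (\ref{eq:rsum}) yields
\begin{equation*}
I(x) = \sum_{n=1}^{x} \frac{r(n,2)}{4} = \frac{1}{4}\bigl(\pi x + O(x^{1/3})\bigr) = \frac{\pi x}{4} + O(x^{1/3}),
\end{equation*}
which combined with the squaring step completes the proof.

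There is really no substantive obstacle to this lemma: the single non-trivial input is the Gauss circle problem bound (\ref{eq:rsum}), which is cited as a black box, and the rest is elementary bookkeeping using the PID structure of $\Zi$ and the order of its unit group. The only thing to be careful about is tracking the error after squaring, which is straightforward since the leading term grows like $x$ while the error in $I(x)$ is only of order $x^{1/3}$.
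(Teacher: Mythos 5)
Your proposal is correct and is essentially the paper's own argument: both reduce the pair count to the single-variable ideal count via the identity that the number of ideals of norm $n$ is $r(n,2)/4$, apply Sierpi\'nski's estimate (\ref{eq:rsum}) to get $\frac{1}{16}\bigl[\pi x + O(x^{1/3})\bigr]^2$, and expand the square to absorb the cross term into $O(x^{4/3})$. The only cosmetic difference is that you name the one-variable count $I(x)$ and square it, while the paper writes the double sum and factors it; the substance is identical.
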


\begin{proof}
Let $\mfn$ and $\mfm$ be nonzero ideals. Then
\begin{align*}
\#\{\mathfrak{n}, \mathfrak{m} \subset \mathbb{Z}[i]^2 : N(\mathfrak{n}), N(\mathfrak{m}) \leq x\} 
&= 	\sum_{\underset {N(\mfn) \leq x}{\mathfrak{n}\subset \mathbb{Z}[i]}} \, \, \, \sum_{\underset {N(\mfm) \leq x}{\mathfrak{m}\subset \mathbb{Z}[i]}}{1},
\end{align*}
and we may rewrite this as
\begin{align*}
& \frac{1}{16} \sum_{N(\mathfrak{n})=1}^{ \lfloor x\rfloor} {r\left(N(\mathfrak{n}),2\right)} \sum_{N(\mathfrak{m})=1}^{  \lfloor x\rfloor} {r\left(N(\mathfrak{m}),2\right)} 
\end{align*}
which by equation (\ref{eq:rsum}) equals
\begin{align}
&= \frac{1}{16} \left[ \pi x + O(\lfloor x \rfloor^{1/3}) \right]^2. \nonumber 
\end{align}
Further, since $O(\lfloor x \rfloor)=O(x)$ we may expand $\left[\pi x + O(x^{1/3})\right]^2$ and obtain $\pi^2 x^2 + 2\pi x \cdot O(x^{1/3}) + O(x^{2/3})$ which reduces to $\pi^2 x^2 + O(x^{4/3}).$ Thus, the total number of $\mfn$ and $\mfm$ with norm at most $x$ is
\begin{align*}
\frac{\pi^2 x^2 }{16} + O(x^{4/3}).
\end{align*}
\end{proof}

\begin{lem}\label{lem:lem2} 
The total number of pairs of nonzero ideals $\mathfrak{n}$ and $\mathfrak{m}$ in $\Zi$ with norm $x$ or less having greatest common divisor $\mfk$ is
\begin{equation*}
\frac{\pi^2x^2}{16\zeta_{\mathbb{Q}(i)}(2)k^2} + O\left(\frac{x^{4/3}}{k^{4/3}}\right)
\end{equation*} 
where $k=N(\mfk)$.
\end{lem}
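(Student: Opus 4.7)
The plan is to reduce counting pairs with gcd $\mathfrak{K}$ to counting coprime pairs in the smaller range $x/k$, then evaluate the latter via Möbius inversion using Lemma \ref{lem:lem1}. First I observe that $(\mathfrak{n},\mathfrak{m})=\mathfrak{K}$ is equivalent to writing $\mathfrak{n}=\mathfrak{K}\mathfrak{n}'$, $\mathfrak{m}=\mathfrak{K}\mathfrak{m}'$ with $(\mathfrak{n}',\mathfrak{m}')=(1)$, and the norm constraint $N(\mathfrak{n}), N(\mathfrak{m})\le x$ translates to $N(\mathfrak{n}'), N(\mathfrak{m}')\le x/k$. So the count we want equals the number of coprime ordered pairs of nonzero ideals of norm at most $y:=x/k$.

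Next I apply the identity (\ref{eq:mob}) in the form
\begin{equation*}
\#\{(\mathfrak{n}',\mathfrak{m}'):\,(\mathfrak{n}',\mathfrak{m}')=(1),\,N(\mathfrak{n}'),N(\mathfrak{m}')\le y\}=\sum_{N(\mathfrak{d})\le y}\mu(\mathfrak{d})\,P(y,\mathfrak{d}),
\end{equation*}
where $P(y,\mathfrak{d})$ counts ordered pairs of ideals divisible by $\mathfrak{d}$ with norm at most $y$. Writing each such ideal as $\mathfrak{d}$ times an ideal of norm at most $y/N(\mathfrak{d})$ and invoking Lemma \ref{lem:lem1} gives
\begin{equation*}
P(y,\mathfrak{d})=\frac{\pi^{2}}{16}\left(\frac{y}{N(\mathfrak{d})}\right)^{2}+O\!\left(\left(\frac{y}{N(\mathfrak{d})}\right)^{4/3}\right).
\end{equation*}

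Summing, the main term is $\frac{\pi^{2}y^{2}}{16}\sum_{N(\mathfrak{d})\le y}\mu(\mathfrak{d})/N(\mathfrak{d})^{2}$. I complete this to the full Dirichlet series via the generating function for $\mu$, using that the tail $\sum_{N(\mathfrak{d})>y}1/N(\mathfrak{d})^{2}$ is $O(1/y)$ by partial summation from $\#\{\mathfrak{d}:N(\mathfrak{d})\le t\}=O(t)$ (a consequence of (\ref{eq:rsum})); this produces the main term $\frac{\pi^{2}y^{2}}{16\,\zeta_{\mathbb{Q}(i)}(2)}+O(y)$. For the error term I use that $\sum_{\mathfrak{d}}1/N(\mathfrak{d})^{4/3}$ converges absolutely (exponent $4/3>1$), so the contribution is $O(y^{4/3})$, which absorbs the $O(y)$ above.

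Substituting $y=x/k$ yields the claimed asymptotic $\frac{\pi^{2}x^{2}}{16\,\zeta_{\mathbb{Q}(i)}(2)k^{2}}+O(x^{4/3}/k^{4/3})$. The main technical point to watch is the truncation in the Möbius sum: one must confirm that completing to the infinite sum (cost $O(y)$) and the $O(y^{4/3})$ contribution from Lemma \ref{lem:lem1}'s error term both fit inside the stated $O(x^{4/3}/k^{4/3})$, and that the convergence exponent of $\sum 1/N(\mathfrak{d})^{4/3}$ is strictly above the abscissa. Once these bookkeeping steps are handled, the result follows directly.
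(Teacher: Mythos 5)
Your proposal is correct and takes essentially the same approach as the paper: M\"obius inversion via identity (\ref{eq:mob}) combined with the pair count of Lemma \ref{lem:lem1}, together with the reindexing $\mfn=\mfk\mfn'$, $\mfm=\mfk\mfm'$ to pass between pairs with greatest common divisor $\mfk$ and coprime pairs of norm at most $x/k$. The only differences are cosmetic: you perform the reindexing before rather than after the coprime count, and your tail bound $\sum_{N(\mfd)>y}1/N(\mfd)^{2}=O(1/y)$ is slightly sharper than the paper's $o(x^{\epsilon-1})$ bound obtained from $r(n,2)=o(n^{\epsilon})$, though both are absorbed into the same $O(y^{4/3})$ error term.
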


\begin{proof}
Let $\mathfrak{n}$ and $\mathfrak{m}$ be nonzero ideals. The number of pairs of $\mfn$ and $\mfm$ with norm $x$ or less which are relatively prime is 
\begin{align*}
\#\{\mathfrak{n}, \mathfrak{m} \subset \mathbb{Z}[i]^2 :  N&(\mathfrak{n}), N(\mathfrak{m}) \leq  x \mbox{ and } (\mathfrak{n}, \mathfrak{m}) = (1)\} \nonumber \\[5pt]
&= 	\sum_{\underset { N(\mfn) \leq x}{\mathfrak{n}\subset \mathbb{Z}[i]}} \, \, \, \sum_{\underset{(\mfn,\mfm)=(1)}{\underset { N(\mfm) \leq x}{\mathfrak{m}\subset \mathbb{Z}[i]}}}{1} \nonumber \\[5pt] 	
&= 	\sum_{\underset { N(\mfn) \leq x}{\mathfrak{n}\subset \mathbb{Z}[i]}} \, \, \,\sum_{\underset { N(\mfm) \leq x}{\mathfrak{m}\subset \mathbb{Z}[i]}} \, \, \,\sum_{\underset{ \mathfrak{d}|(\mfn,\mfm)}{\mathfrak{d}\subset\Zi}}{\mu(\mathfrak{d})} \nonumber 
\end{align*}
where in the last line we have used identity (\ref{eq:mob}). Reindexing with $\mathfrak{n} = \mathfrak{d}\mathfrak{n}'$ and $\mathfrak{m} = \mathfrak{d}\mathfrak{m}'$ where the norms of $\mfn'$ and $\mfm'$ range from 1 to $x/N(\mathfrak{d})$, we may rewrite this as
\begin{align*}
& \sum_{\underset{  N(\mathfrak{d}) \leq x}{\mathfrak{d}\subset\Zi}}{\mu(\mathfrak{d})} \sum_{\underset { N(\mathfrak{n}') \leq \frac{x}{N(\mathfrak{d})}}{\mathfrak{n}'\subset \mathbb{Z}[i]}}  \,\sum_{\underset {N(\mathfrak{m}') \leq \frac{x}{N(\mathfrak{d})}}{\mathfrak{m}'\subset \mathbb{Z}[i]}}{1}   \\[6pt]
&= \frac{1}{16}\sum_{\underset{ N(\mathfrak{d}) \leq x}{\mathfrak{d}\subset\Zi}}{\mu(\mathfrak{d})} 
	\sum_{N\mathfrak{n}'=1}^{\left\lfloor \frac{x}{N\mathfrak{d}}\right\rfloor}{r\left(N(\mathfrak{n}'),2\right)} 
	\sum_{N\mathfrak{m}'=1}^{\left\lfloor \frac{x}{N\mathfrak{d}}\right\rfloor}{r\left(N(\mathfrak{m}'),2\right)}. 
\end{align*}
As in Lemma \ref{lem:lem1}, this reduces to
\begin{align*}
\frac{1}{16}\sum_{\underset{ N(\mathfrak{d}) \leq x}{\mathfrak{d}\subset\Zi}}{\mu(\mathfrak{d})} \left[ \frac{\pi^2x^2}{N(\mathfrak{d})^2} + O \left(\frac{x}{N(\mathfrak{d})}\right)^{4/3} \right].
\end{align*}

 We then distribute the summation to obtain

\begin{align} 
&  \frac{\pi^2x^2}{16}  \sum_{\underset{ N(\mathfrak{d}) \leq x}{\mathfrak{d}\subset\Zi}}{\frac{\mu({\mathfrak{d})}}{N(\mathfrak{d})^2}} 	\quad + \quad O\left( \sum_{\underset{  N(\mathfrak{d}) \leq x}{\mathfrak{d}\subset\Zi}}{\left(\frac{x}{N(\mathfrak{d})}\right)^{4/3}}\right). \label{eq:mobius}
\end{align}
To evaluate the main term, we call on the generating function $\sum_{\mfn\subset\Zi}{\frac{\mu(\mathfrak{n})}{N(\mathfrak{n})^s}} = 1/\zeta_{\mathbb{Q}(i)}(s)$ for Re$(s)>1$, to see that
\begin{align*}
\sum_{\underset{ N(\mathfrak{d}) \leq x}{\mathfrak{d}\subset\Zi}}{\frac{\mu({\mathfrak{d})}}{N(\mathfrak{d})^2}}  
&= \frac{1}{\zeta_{\mathbb{Q}(i)}(2)} - \sum_{n=x+1}^{\infty}\sum_{\underset{ N(\mathfrak{d})=n}{\mathfrak{d}\subset\Zi}}{\frac{\mu(\mathfrak{d})}{N(\mathfrak{d})^2}}
\end{align*}
which implies
\begin{align*}
\bigg| \frac{1}{\zeta_{\mathbb{Q}(i)}(2)}   - \sum_{\underset{  N(\mathfrak{d}) \leq x}{\mathfrak{d}\subset\Zi}}{\frac{\mu({\mathfrak{d})}}{N(\mathfrak{d})^2}} \bigg| \quad \leq \quad \sum_{n=x+1}^{\infty}{\frac{1}{n^2}} \sum_{\underset{ N(\mathfrak{d})=n}{\mathfrak{d}\subset\Zi}}{1} \quad = \quad \frac{1}{4}\sum_{n=x+1}^{\infty}{\frac{r(n,2)}{n^2}}.
\end{align*}
Now we note that $r(n,2)\leq4\sigma_0(n)=o(n^\epsilon)$ for all $\epsilon>0$, where $\sigma_0$ represents the number of divisors of $n$. Thus $\frac{1}{4}\sum_{n=x+1}^{\infty}{\frac{r(n,2)}{n^2}} \leq \sum_{n=x+1}^{\infty}{\frac{o(n^\epsilon)}{n^2}}=o(x^{\epsilon-1})$ and so
\begin{align*} 
\bigg| \frac{1}{\zeta_{\mathbb{Q}(i)}(2)}   - \sum_{\underset{  N(\mathfrak{d}) \leq x}{\mathfrak{d}\subset\Zi}}{\frac{\mu({\mathfrak{d})}}{N(\mathfrak{d})^2}} \bigg| \leq o(x^{\epsilon-1}) \qquad \mbox{or} \qquad
\sum_{\underset{ N(\mathfrak{d}) \leq x}{\mathfrak{d}\subset\Zi}}{\frac{\mu({\mathfrak{d})}}{N(\mathfrak{d})^2}}  = \frac{1}{\zeta_{\mathbb{Q}(i)}(2)} + o(x^{\epsilon-1}). 
\end{align*}
For the error term of (\ref{eq:mobius}), we have
\begin{align*}
\sum_{\underset{  N(\mathfrak{d}) \leq x}{\mathfrak{d}\subset\Zi}}{\left(\frac{1}{N(\mathfrak{d})}\right)^{4/3}} 
\quad &= \quad   \sum_{n=1}^{x}{\frac{1}{n^{4/3}}} \sum_{\underset{N(\mathfrak{d})=n}{\mathfrak{d}\subset\Zi}}{1}
\quad = \quad \frac{1}{4} \sum_{n=1}^{x}{\frac{r(n,2)}{n^{4/3}}}
\end{align*}
and again use the bound $r(n,2)\leq o(n^\epsilon)$ to see that $\frac{1}{4} \sum_{n=1}^{x}{\frac{r(n,2)}{n^{4/3}}} \leq  \sum_{n=1}^{x}{o(n^{\epsilon-4/3})}$ which equals $o(x^{\epsilon-1/3})+o(1).$ From this it is clear that $O\left(x^{4/3} \sum_{n=1}^{x}{\frac{r(n,2)}{n^{4/3}}}\right) = O(o(x^{4/3})) = O(x^{4/3})$. Thus (\ref{eq:mobius}) becomes 
\begin{align*}
\frac{\pi^2x^2}{16\zeta_{\mathbb{Q}(i)}(2)} + o(x^{\epsilon-1}) + O(x^{4/3})
\end{align*}
which allows us to conclude 
\begin{equation*}
\# \{\mathfrak{n}, \mathfrak{m} \subset \mathbb{Z}[i]^2 : N(\mathfrak{n}), N(\mathfrak{m}) \leq x \mbox{ and } (\mathfrak{n}, \mathfrak{m}) = (1)\} = \frac{\pi^2x^2}{16\zeta_{\mathbb{Q}(i)}(2)} + O(x^{4/3}).
\end{equation*}
Having counted the number of relatively prime $\mfn$ and $\mfm$ within a given norm, we can now reindex to obtain the number of them which have $(\mfn,\mfm)=\mfk$. Letting $\mathfrak{n}=\mathfrak{n}'\mathfrak{K}$ and $\mathfrak{m}=\mathfrak{m}'\mathfrak{K}$, we see that $\mathfrak{n}'$ and $\mathfrak{m}'$ are relatively prime if and only if $\mfn$ and $\mfm$ have $\mfk$ as their greatest common divisor. Hence, the number of relatively prime pairs $\mathfrak{n}'$ and $\mathfrak{m}'$ with norm $y$ or less must be equivalent to the number of pairs $\mathfrak{n}$ and $\mathfrak{m}$, with norm $yk$ or less (where $k=N(\mathfrak{K})$), having greatest common divisor $\mfk$. Thus,

\begin{align}
\#\{\mathfrak{n}, \mathfrak{m} \subset \mathbb{Z}[i]^2 :   N&(\mathfrak{n}),  N(\mathfrak{m}) \leq x  \mbox{ and } (\mathfrak{n}, \mathfrak{m})  = \mathfrak{K}\}\nonumber \\[6pt]
 & = \#\{\mathfrak{n}',\mathfrak{m}' \subset \mathbb{Z}[i]^2 :  N(\mathfrak{n}'), N(\mathfrak{m}') \leq \frac{x}{k} \mbox{ and } (\mathfrak{n}',\mathfrak{m}')=(1)\} \nonumber \\[6pt]
&= \frac{\pi^2x^2}{16\zeta_{\mathbb{Q}(i)}(2)k^2} + O\left(\frac{x^{4/3}}{k^{4/3}}\right) . \nonumber
\end{align}
\end{proof}

At last, the probability that $\mfn$ and $\mfm$, having norm at most $x$, will have greatest common divisor $\mfk$ is defined to be the number of pairs of ideals of norm $x$ or less which have greatest common divisor $\mfk$ divided by the total number of pairs of ideals of norm $x$ or less. Thus, by Lemmas \ref{lem:lem1} and \ref{lem:lem2}, 
\begin{align}
P_x\{\mfn, \mfm \subset \mathbb{Z}[i]^2 :  N&(\mfn), N(\mfm) \leq x \mbox{ and } (\mfn,\mfm)=\mfk\} \label{eq:clean} \\ \nonumber
&=  \left[\frac{\pi^2 x^2}{16} + O(x^{4/3})\right]^{-1} \cdot \left[\frac{\pi^2x^2}{16\zeta_{\mathbb{Q}(i)}(2)k^2} + O\left(\frac{x^{4/3}}{k^{4/3}}\right)\right].\label{eq:clean}
\end{align}
We can rewrite $\left[\frac{\pi^2 x^2}{16} + O(x^{4/3})\right]^{-1}$ as $16\pi^{-2} x^{-2}\left[1+O(x^{-2/3})\right]^{-1}$ which is equal to $16\pi^{-2}x^{-2}\left[1+O(x^{-2/3})\right]$ since $\left[1+f(x)\right]^{-1}=1+O(f(x))$ for $f(x)$ tending towards 0 as $x$ approaches infinity.

Line (\ref{eq:clean}) then becomes
\begin{align*} 
&\pi^{-2}x^{-2}\left[1+O(x^{-2/3})\right] \cdot\left[\frac{\pi^2x^2}{\zeta_{\mathbb{Q}(i)}(2)k^2} + O\left(\frac{x^{4/3}}{k^{4/3}}\right)\right] \\[5pt]
&= \left[1+O(x^{-2/3})\right] \cdot \left[\frac{1}{\zeta_{\mathbb{Q}(i)}(2)k^2} + O\left(\frac{1}{x^{2/3}k^{4/3}}\right)\right] \\[5pt]
&= \frac{1}{\zeta_{\mathbb{Q}(i)}(2)k^2} + O\left(\frac{1}{x^{2/3}k^{4/3}}\right) + O\left(\frac{1}{x^{2/3}k^2}\right) + O\left(\frac{1}{x^{4/3}k^{4/3}}\right)
\end{align*}
or finally
\begin{align*}
 \frac{1}{\zeta_{\mathbb{Q}(i)}(2)k^2} \quad + \quad O\left(\frac{1}{x^{2/3}k^{4/3}}\right), 
\end{align*}
completing the proof of Theorem \ref{thm:probthm}. The following corollary is a direct consequence of Theorem \ref{thm:probthm} for the special case when $\mathfrak{K}=(1)$.

\begin{cor}\label{cor:cor} The probability that a pair of Gaussian integers with norm $x$ or less are relatively prime is
\begin{equation*}
\frac{1}{\zeta_{\mathbb{Q}(i)}(2)} + O\left(\frac{1}{x^{2/3}}\right).
\end{equation*}
\end{cor}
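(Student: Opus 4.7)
The plan is to derive the corollary as an immediate specialization of Theorem \ref{thm:probthm} to the case $\mfk = (1)$. Setting $N(\mfk) = 1$ in the probability formula of that theorem yields precisely $\frac{1}{\zeta_{\mathbb{Q}(i)}(2)} + O\!\left(x^{-2/3}\right)$, matching the stated expression. The only item requiring attention is the reconciliation between the Gaussian-integer formulation of the corollary and the ideal formulation of the theorem.

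For this I would observe that the principal-ideal map $\eta \mapsto (\eta)$ restricts to a uniformly $4$-to-$1$ surjection from the set of nonzero Gaussian integers of norm at most $x$ onto the set of nonzero ideals of $\Zi$ of norm at most $x$, because every ideal has exactly the four associate generators $\kappa, -\kappa, i\kappa, -i\kappa$, all of the same norm. Consequently, choosing a Gaussian integer uniformly at random from those of norm at most $x$ induces, via this map, the uniform distribution on nonzero ideals of norm at most $x$. Moreover, two Gaussian integers $\eta$ and $\mu$ are relatively prime (that is, share only units as common divisors) if and only if $((\eta),(\mu)) = (1)$ as ideals. Passing to the product distribution, independently sampling two Gaussian integers therefore corresponds exactly to independently sampling two ideals in the sense of the theorem, and the event \emph{relatively prime} coincides with the event \emph{gcd equals $(1)$}.

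Combining these observations, the probability named in the corollary equals the probability computed in Theorem \ref{thm:probthm} with $\mfk = (1)$, which delivers the claimed asymptotic. There is no substantive obstacle here — the corollary is essentially a restatement, in the language of Gaussian integers, of the $\mfk = (1)$ case of Theorem \ref{thm:probthm}, and the only ``work'' is the one-line bookkeeping above to convert between generators and their ideals.
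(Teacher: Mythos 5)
Your proof is correct and takes the same route as the paper, which likewise obtains the corollary as the immediate specialization $\mfk=(1)$, $N(\mfk)=1$ of Theorem \ref{thm:probthm}. Your extra bookkeeping on the $4$-to-$1$ correspondence $\eta\mapsto(\eta)$ between Gaussian integers and ideals of a given norm makes explicit a translation the paper leaves implicit (via its remark on associates), but it is the same argument.
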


In effect, Corollary \ref{cor:cor} tells us that for $x$ large, the probability that two Gaussian integers are relatively prime is asymptotic to $[\zeta_{\mathbb{Q}(i)}(2)]^{-1}$ as $x$ tends towards infinity. This is in agreement with the work of Collins and Johnson who state the probability as $[\zeta_{\mathbb{Q}(i)}(2)]^{-1}=[\zeta(2)L(2,\chi)]^{-1}\approx0.6637$, where $L(2,\chi)$ is a Dirichlet L-series and $\chi$ the primitive Dirichlet character modulo 4.

\section{Expected Value}\label{sec:sec3}
Having derived the probability distribution function found in Theorem \ref{thm:probthm}, we are ready to find an expression for the expected value of our random variable, $N(\mfn, \mfm)=k$, where the norm of $\mfn$ and $\mfm$ ranges from 1 to $x$. To do this, we must express our probability in terms of $k$ as well. The modification is simple, however. Since the number of ideals with norm $k$ is equivalent to $r(k,2)/4$, the probability that the greatest common divisor of $\mathfrak{n}$ and $\mathfrak{m}$ has norm $k$ must be
\begin{align*} 
 P_x\{N(\mathfrak{n},\mathfrak{m})=k\}=\frac{r(k,2)}{4\zeta_{\mathbb{Q}(i)}(2)k^2}  +  O\left(\frac{r(k,2)}{x^{2/3}k^{4/3}}\right). 
\end{align*}
Then, by definition of expected value
\begin{align}
E_x\{N(\mathfrak{n},\mathfrak{m})\} 
&= \sum_{k=1}^{x}{k\cdot P_x\{N(\mathfrak{n},\mathfrak{m})=k\}} \nonumber \\[6pt]
&= \sum_{k=1}^{x}{k\cdot \left[ \frac{r(k,2)}{4\zeta_{\mathbb{Q}(i)}(2)k^2}  +   O\left(\frac{r(k,2)}{x^{2/3}k^{4/3}}\right) \right]} \nonumber \\[6pt]
&= \frac{1}{4\zeta_{\mathbb{Q}(i)}(2)} \sum_{k=1}^{x}{\frac{r(k,2)}{k}} \quad + \quad O\left(\frac{1}{x^{2/3}}\sum_{k=1}^{x}\frac{r(k,2)}{k^{1/3}}\right). \label{eq:byparts}  
\end{align}
Using Stieltjes integration by parts to evaluate the error term, we obtain
\begin{align*}
\sum_{k=1}^{x}\frac{r(k,2)}{k^{1/3}} &= x^{-1/3}\sum_{k=1}^{x}{r(k,2)} - 4 - \int_1^x \left( \pi k + O\left(k^{1/3}\right)\right)\left(-\frac{1}{3}k^{-4/3}\right) \,dk \\[5pt]
&= \frac{3\pi}{2} x^{2/3} + O(\log{x})
\end{align*}
which implies $O(x^{-2/3}\sum_{k=1}^{x}\frac{r(k,2)}{k^{1/3}})=O(1+x^{-2/3}\log{x})=O(1)$. The main term of (\ref{eq:byparts}) can be rewritten using Sierpi\'{n}ski's identity from equation (\ref{eq:rnsum}). Thus the expected value is equal to
 \begin{align*}
\frac{1}{4\zeta_{\mathbb{Q}(i)}(2)} \left[\pi (S + \log{x}) + O\left(\frac{1}{x^{1/2}}\right)\right] +O\left(1\right)
\end{align*}
or
\begin{align*} 
\frac{\pi}{4\zeta_{\mathbb{Q}(i)}(2)} \log{x}+ O(1).
\end{align*}
This completes the proof of Theorem \ref{thm:expthm}.

\section{Higher Moments}\label{sec:sec4}
At last, we show that there exists some constant $c_n\in\mathbb{R}$ such that the main term of the $n^{\text{th}}$ moment of $N(\mfn,\mfm)$ must be of the form $c_nx^{n-1}$ for $n> 2$. Let $N(\mfn),N(\mfm)\leq x$ with $(\mfn,\mfm)=\mfk$ and restrict $N(\mfk)$ to the interval $\left(\frac{x}{j+1},\frac{x}{j}\right]$. We may then write $\mfn=\mfn'\mfk$ and $\mfm=\mfm'\mfk$ where $(\mfn',\mfm')=(1)$. The restriction on the norm of $\mfk$ allows us to see that $N(\mfn'),N(\mfm')< x\cdot\frac{j+1}{x}$ which implies $N(\mfn'),N(\mfm')\leq j$. Now define
\begin{align*}
f(j) = \#\{(\mfn',\mfm')\subset \Zi^2:N(\mfn'),N(\mfm')\leq j \text{ and } (\mfn',\mfm')=(1)\} 
\end{align*}
for $j\in\mathbb{N}$. By Lemma \ref{lem:lem2},
\begin{align*}
f(j)&=\frac{\pi^2j^2}{16\zeta_{\mathbb{Q}(i)}(2)}+O(j^{4/3})\\[3pt]
&= O(j^2).
\end{align*}

Our reindexing above shows that this expression for $f(j)$ also gives us the number of pairs of ideals with norm $x$ or less having greatest common divisor $\mfk$ where $\frac{x}{j+1} < N(\mfk)\leq \frac{x}{j}$. Thus the $n^{\text{th}}$ moment of $N(\mfn,\mfm)$ is given by
\begin{align}\label{eq:hmom}
E_x\{N(\mfn,\mfm)^n\}=  \frac{1}{\left[\pi^2x^2/16+O(x^{4/3})\right]}\cdot \left[\sum_{j=1}^{x}{f(j)}\sum_{\underset{\frac{x}{j+1}< N(\mfk) \leq \frac{x}{j}}{\mfk\subset\Zi}}{N(\mfk)^n}\right].
\end{align}

We next turn our attention to the inner sum of (\ref{eq:hmom}). First note that $$\sum_{\underset{\frac{x}{j+1}< N(\mfk) \leq \frac{x}{j}}{\mfk\subset\Zi}}{N(\mfk)^n}=\frac{1}{4}\sum_{k=\lceil \frac{x}{j+1}\rceil}^{ \lfloor \frac{x}{j}\rfloor}{k^n r(k,2)}$$ where $k=N(\mfk)$. Then Stieltjes integration by parts yields

\begin{align*}
\frac{1}{4}\sum_{k=\lceil \frac{x}{j+1}\rceil}^{ \lfloor \frac{x}{j}\rfloor}{k^n r(k,2)} 
&= \lfloor x/j \rfloor^n\sum_{k=1}^{\lfloor \frac{x}{j}\rfloor}{r(k,2)} - \lceil \frac{x}{j+1}\rceil^n\sum_{k=1}^{\lceil \frac{x}{j+1}\rceil}{r(k,2)} - \int_{\lceil \frac{x}{j+1}\rceil}^{\lfloor \frac{x}{j}\rfloor} nt^{n-1}\left(\pi t +O(t^{1/3})\right) \,dt \\[3pt]
&= \frac{\pi}{4(n+1)}x^{n+1}\left[\frac{1}{j^{n+1}} - \frac{1}{(j+1)^{n+1}}\right] + O\left(\frac{x}{j}\right)^{n+1/3}.\\
\end{align*}

The numerator of $E_x\{N(\mfn,\mfm)^n\}$ is now equal to

\begin{align} \label{eq:numerator}
\frac{\pi}{4(n+1)}x^{n+1}\sum_{j=1}^{x}O(j^2) \left( \frac{(j+1)^{n+1}-j^{n+1}}{j^{n+1}(j+1)^{n+1}} \right)+ x^{n+1/3}\sum_{j=1}^{x}O(j^2)O\left( \frac{1}{j^{n+1/3}} \right).
\end{align}
The sum on the left is
\begin{align*}
\sum_{j=1}^{x}O(j^2)O\left(\frac{1}{j(j+1)^{n+1}}\right)=\sum_{j=1}^{x}O\left(\frac{1}{j^{-1}(j+1)^{n+1}}\right)
\end{align*}
which is bounded above by $\sum_{j=1}^{x}O(1/j^n)$. For $x$ tending toward infinity and $n\geq2$, this converges to some constant $c_n'\in\mathbb{R}$. A similar argument shows that the second sum of (\ref{eq:numerator}) is likewise convergent for $n>2$. We thus conclude that the main term of $E_x\{N(\mfn,\mfm)^n\}$ is of the form $c_n'x^{n+1}$.

At last, we divide this by the total number of pairs of ideals $\mfn,\mfm$ with norm at most $x$ to obtain the main term of the $n^\text{th}$ moment of $N(\mfn,\mfm)$ for $n>2$:
\begin{align*}
 \frac{ c_n'x^{n+1}}{\left[\pi^2x^2/16+O(x^{4/3})\right]}= c_nx^{n-1}\frac{1}{1+O(x^{-2/3})}
\end{align*}
where $c_n=16c_n'/\pi^2$. Since $[1+O(x^{-2/3})]^{-1}=[1+O(x^{-2/3})]$, it follows that for $x$ tending to infinity
\begin{align*}
E_x\{N(\mfn,\mfm)^n\}\sim c_nx^{n-1}.
\end{align*}

With this, we bring the proof of Theorem \ref{thm:hmom} to an end and close by restating our conjecture regarding the constant of $E_x\{N(\mfn,\mfm)^n\}$ for all $n\geq 2$. We also include numerical evidence below which provides support for the conjecture in the cases when $n=2,3,4$ and 5.

\begin{conj2}
For $n\geq 2$, 
 \begin{equation*} 
E_x\{N(\mfn,\mfm)^n\}\sim \frac{4 }{\pi(n+1)}\left\{\frac{2\zeta_{\mathbb{Q}(i)}(n)}{\zeta_{\mathbb{Q}(i)}(n+1)}-1\right\}x^{n-1}. 
 \end{equation*}
\end{conj2}

Using \textsc{Matlab}, we first compiled a list of all pairs of Gaussian integers in the first quadrant with norm $x$ or less and used the Euclidean Algorithm to find all possible greatest common divisors. We determined the $n^{\text{th}}$ moment by raising the norm of each greatest common divisor to the $n^{\text{th}}$ power, summed the terms together, and then divided the result by the total number of pairs of Gaussian integers in the first quadrant with norm $x$ or less. We have graphed the results in Figures \ref{fig:2nd} - \ref{fig:5th} below for the cases when  $n=2,3,4$ and 5 with $x=50,000$. In Table \ref{tbl}, we have listed the main term of the best fit curve corresponding to each graph as compared against the conjectured main term for each value of $n$.

\begin{table}[h!] 
\caption{The main term of the $n^{\text{th}}$ moment of the norm of the greatest common divisor of pairs of Gaussian integers with norm at most $x$.}
\centering
ÊÊÊÊ\begin{tabular}{c c c} 
\hline\hline
ÊÊÊÊÊÊÊ Moment ($n$) & Numerically Derived Term & Conjectured Term
 \\ \hline 
ÊÊÊÊÊÊÊÊÊÊÊÊÊÊÊÊÊÊÊ 2 & 0.63952$x$ & 0.67364$x$Ê \\  [0.3ex] 
ÊÊÊÊÊÊÊÊÊÊÊÊÊÊÊÊÊÊÊ 3&  0.37018$x^2$ &0.37444$x^2$Ê \\  [0.3ex] 
ÊÊÊÊÊÊÊÊÊÊÊÊÊÊÊÊÊÊÊ 4 & 0.27238$x^3$ &0.27309$x^3$Ê \\  [0.3ex] 
ÊÊÊÊÊÊÊÊÊÊÊÊÊÊÊÊÊÊÊ 5  & 0.21914$x^4$ & 0.21928$x^4$ÊÊÊÊÊÊ \\  [0.3ex] 
\hline
ÊÊÊÊ\end{tabular}\label{tbl}
\end{table}

\begin{figure}[H]
\centering
\caption{The graph of $E_x\{N(\mfn,\mfm)^2\}$ for $1 \leq x \leq 50,000$. The best fit curve is 0.63952$x$+ 0.5753.} 
\includegraphics[scale=0.5]{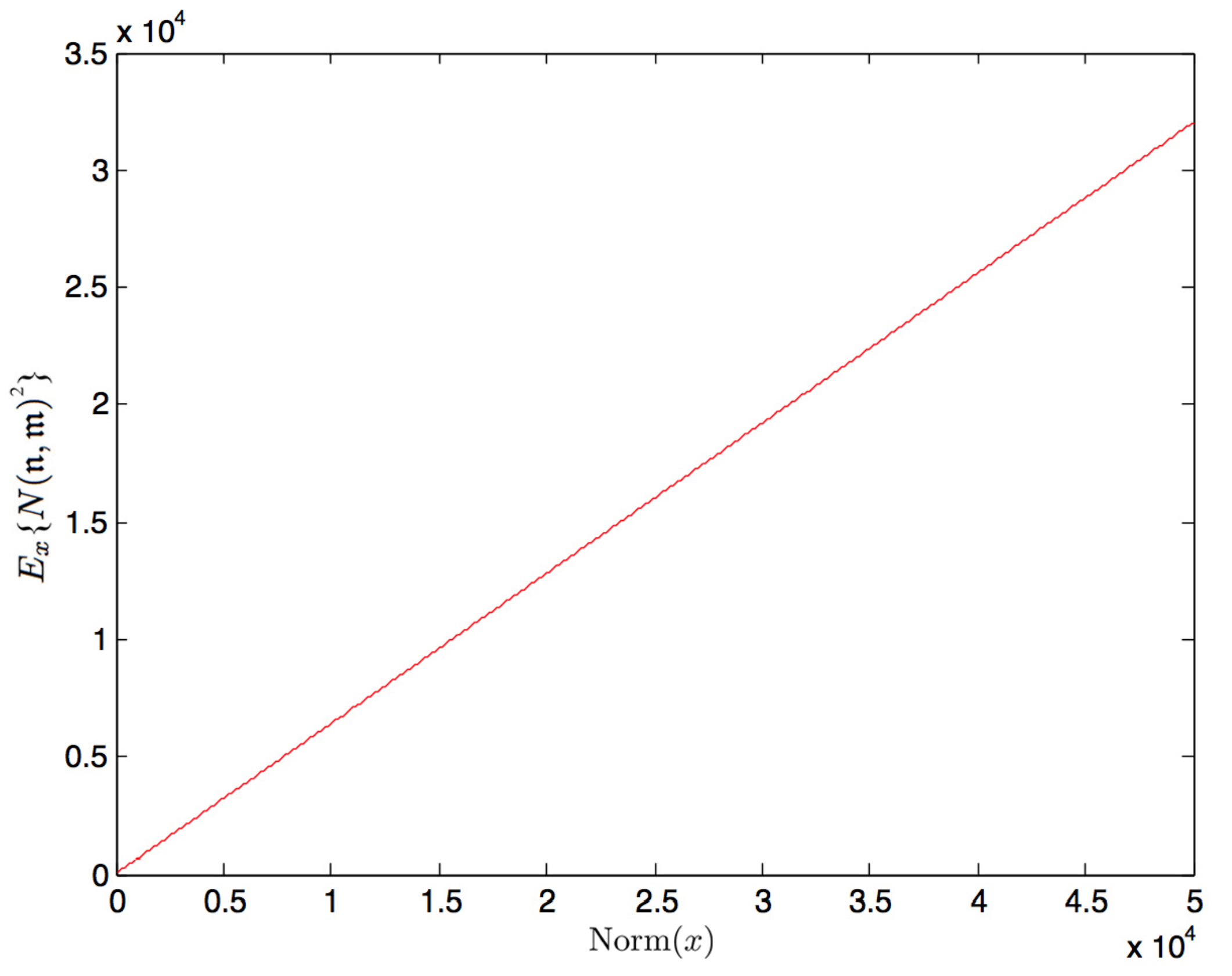}
\label{fig:2nd}
\end{figure}

\begin{figure}[H]
\centering
\caption{The graph of $E_x\{N(\mfn,\mfm)^3\}$ for $1 \leq x \leq 50,000$.  The best fit curve is 0.37018$x^2$ + 0.69337$x$ - 584.8498.} 
\includegraphics[scale=0.5]{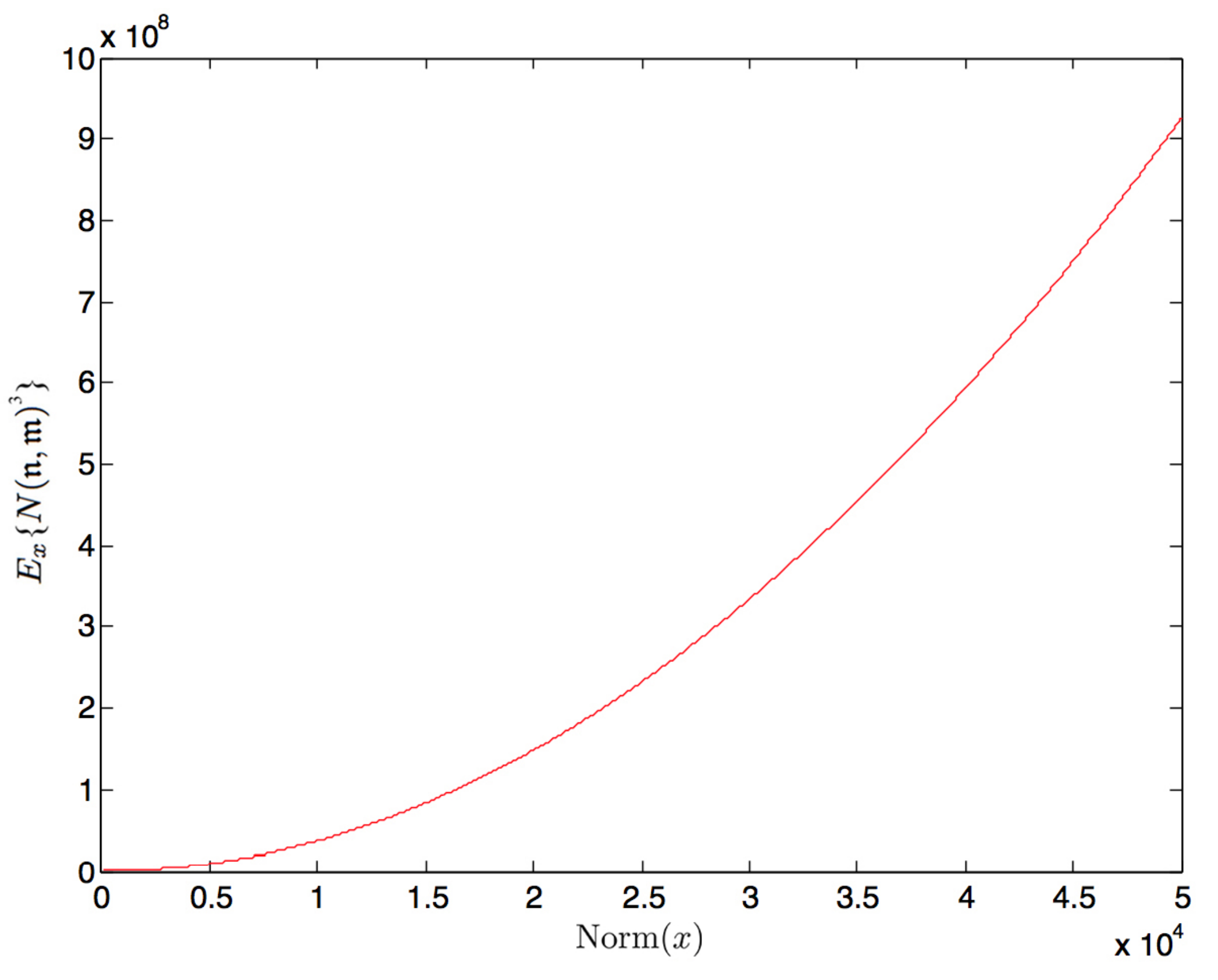}
\label{fig:3rd}
\end{figure}

\begin{figure}[H]
\centering
\caption{The graph of $E_x\{N(\mfn,\mfm)^4\}$ for $1 \leq x \leq 50,000$. The best fit curve is 0.27238$x^3$+0.80149$x^2$ - 3723.1433$x$+12324561.4508.}
\includegraphics[scale=0.5]{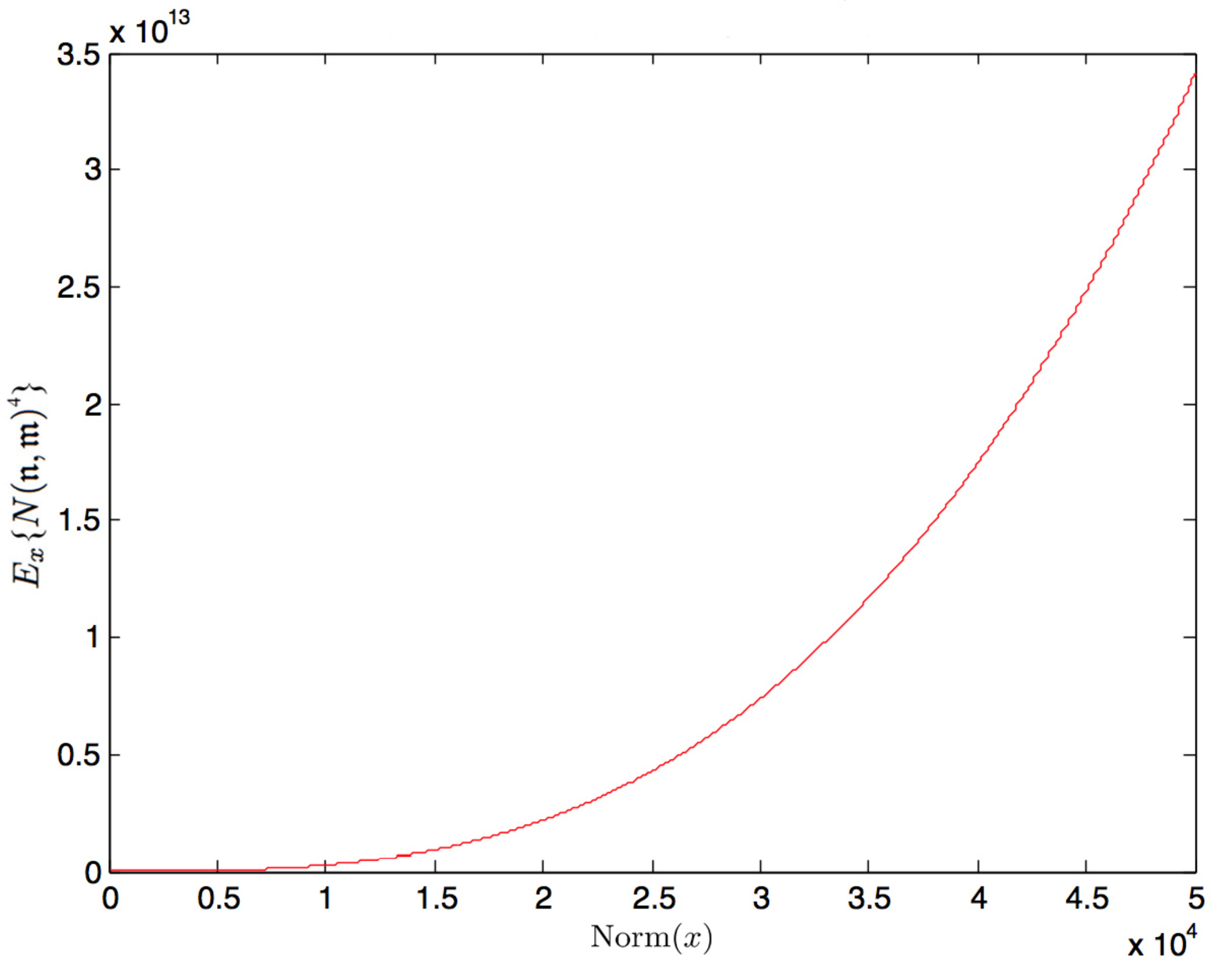}
\label{fig:4th}
\end{figure}

\begin{figure}[H]
\centering
\caption{The graph of $E_x\{N(\mfn,\mfm)^5\}$ for $1 \leq x \leq 50,000$. The best fit curve is $0.21914x^4+0.92436x^3-9773.8223x^2+92150266.2382x-190551355734.3794$.}
\includegraphics[scale=0.8]{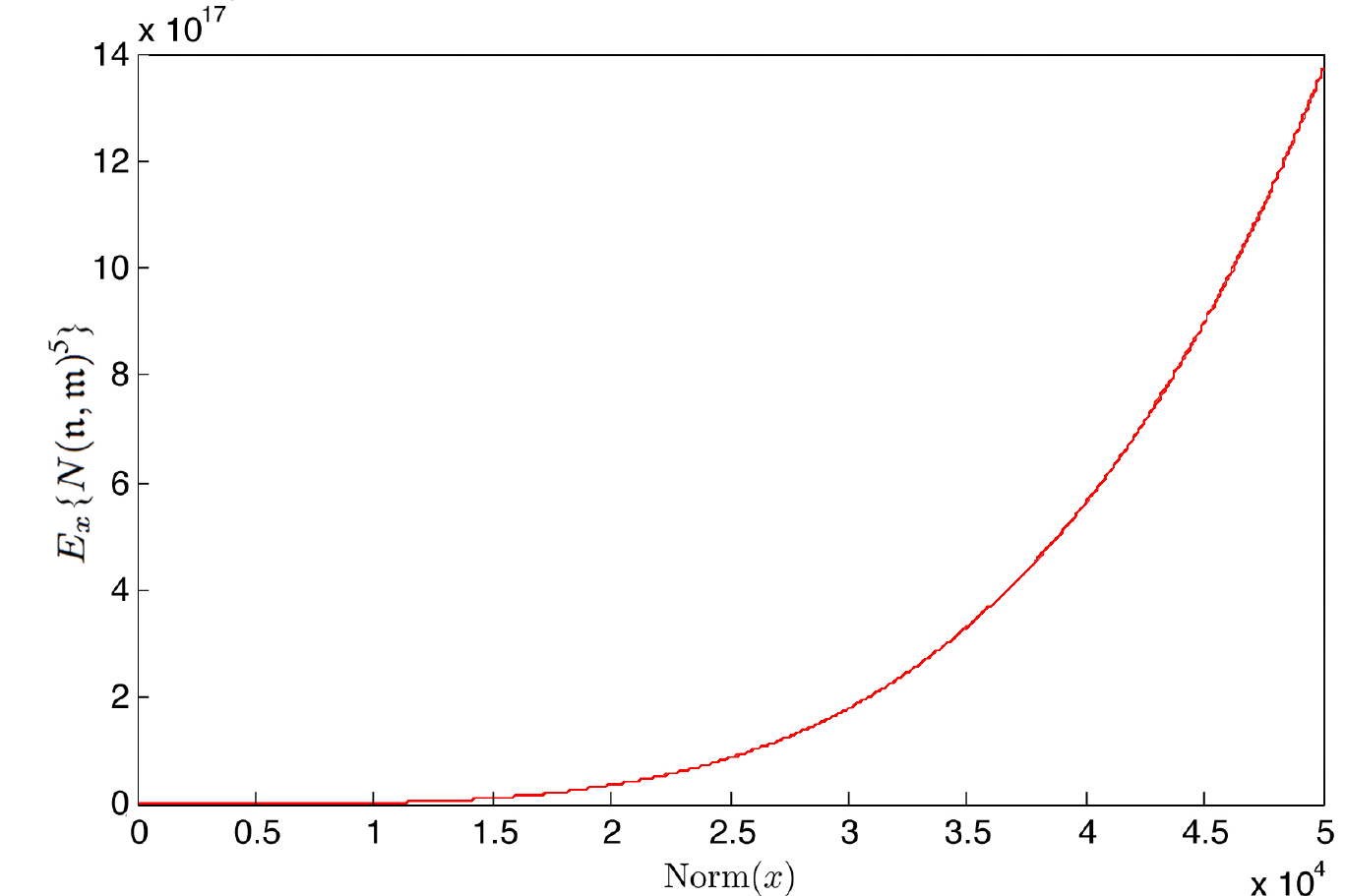}
\label{fig:5th}
\end{figure}

\section{Acknowledgements} 
The authors were supported by the  Rich Summer Internship grant of the Dr. Barnett and Jean-Hollander Rich Scholarship Fund and would like to thank both the selection committee and donors. For additional funding, the first author was supported by the NIH Maximizing Access to Research Careers (MARC) U-STAR grant [5T34 GM007639], the second author by the National Science Foundation [DMS-1201446] and the third author by the City College Fellowships Program. Gratitude also goes to Joseph Dacanay whose help with \textsc{Matlab} laid the groundwork for our experimental data. We thank Giacomo Micheli for making us aware of his work and for his comments on our paper. The authors are also grateful to the referee for helpful comments regarding the statement and proof of Theorem \ref{thm:hmom}. We especially thank our advisor, Dr. Brooke Feigon, without whose patient instruction and insightful comments this work would not have been possible. 

\end{document}